\theoremstyle{plain}
\newtheorem{theorem}{Theorem}
\newtheorem{lemma}[theorem]{Lemma}
\newtheorem{corollary}[theorem]{Corollary}
\theoremstyle{definition}
\newtheorem{remark}[theorem]{Remark}
\newcommand\bC{{\mathbb C}}
\newcommand\bG{{\mathbb G}}
\newcommand\bP{{\mathbb P}}
\newcommand\bZ{{\mathbb Z}}
\newcommand\cO{{\mathcal O}}
\newcommand\fg{\mathfrak{g}}
\newcommand\fh{\mathfrak{h}}
\newcommand\tG{{\widetilde G}}
\newcommand\tK{{\widetilde K}}
\newcommand\aff{{\rm aff}}
\newcommand\charac{{\rm char}}
\renewcommand\div{{\rm div}}
\newcommand\id{{\rm id}}
\newcommand\Alb{{\rm Alb}}
\newcommand\Aut{{\rm Aut}}
\newcommand\Der{{\rm Der}}
\newcommand\Grass{{\rm Grass}}
\newcommand\PGL{{\rm PGL}}
\newcommand\Spec{{\rm Spec}}
\newcommand\Stab{{\rm Stab}}
\newcommand\Supp{{\rm Supp}}
\numberwithin{equation}{section}
\title{Some automorphism groups are linear algebraic}
\author{Michel Brion}
\date{}
\begin{document}

\begin{abstract}
Consider a normal projective variety $X$, a linear algebraic
subgroup $G$ of $\Aut(X)$, and the field $K$ of $G$-invariant
rational functions on $X$. We show that the subgroup of $\Aut(X)$
that fixes $K$ pointwise is linear algebraic. If $K$ has 
transcendence degree $1$ over $k$, then $\Aut(X)$ is an algebraic 
group. 
\end{abstract}

\maketitle

\section{Introduction}
\label{sec:int}

Let $X$ be a projective variety over an algebraically closed field
$k$. The automorphism group functor of $X$, which assigns to any
$k$-scheme $S$ the group of relative automorphisms $\Aut_S(X \times S)$,
is represented by a $k$-group scheme $\Aut_X$, locally of finite 
type (see \cite[p.~268]{Gro61}). Thus, the reduced subscheme of
$\Aut_X$ is equipped with a structure of a smooth $k$-group scheme
$\Aut(X)$. But $\Aut(X)$ is not necessarily an algebraic group; 
equivalently, it may have infinitely many components,
e.g. when $X$ is a product of two isogenous elliptic curves. 
Still, the automorphism group is known to be a linear algebraic 
group for some interesting classes of varieties including smooth 
Fano varieties, complex almost homogeneous manifolds 
(this is due to Fu and Zhang, see \cite[Thm.~1.2]{FZ}) 
and normal almost homogeneous varieties in arbitrary 
characteristics (see \cite[Thm.~1]{Br19}; we say that $X$ 
is almost homogeneous if it admits an action of a smooth 
connected algebraic group with an open dense orbit). 
In this note, we obtain a relative version of the latter result:

\begin{theorem}\label{thm:rel}
Let $X$ be a normal projective variety, $G \subset \Aut(X)$ 
a linear algebraic subgroup, $K := k(X)^G$ the field of 
$G$-invariant rational functions on $X$, and $\Aut_K(X)$ 
the subgroup of $\Aut(X)$ fixing $K$ pointwise. 
Then $\Aut_K(X)$ is a linear algebraic group.
\end{theorem}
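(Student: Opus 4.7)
The plan is to reduce to the absolute result on almost homogeneous varieties from \cite{Br19}. Let $Y$ be a normal projective model of $K = k(X)^G$, and let $\pi \colon X \dashrightarrow Y$ be the rational map corresponding to the inclusion $K \subset k(X)$. I would first replace $X$ by a $G$-equivariant modification on which this rational quotient becomes a morphism: take $\tX$ to be the normalization of the closure in $X \times Y$ of the graph of $\pi$, so that $\tX$ is normal projective, the first projection $\tX \to X$ is a $G$-equivariant birational morphism, and the second projection defines a morphism $\tilde\pi \colon \tX \to Y$. Because this construction is canonical, every $\sigma \in \Aut_K(X)$ lifts uniquely to an automorphism of $\tX$, yielding an embedding of $k$-group schemes $\Aut_K(X) \hookrightarrow \Aut_Y(\tX)$, where $\Aut_Y(\tX) \subset \Aut(\tX)$ is the closed subgroup of automorphisms commuting with $\tilde\pi$. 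It therefore suffices to prove that $\Aut_Y(\tX)$ is a linear algebraic group.

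Since $K$ has transcendence degree $\dim Y$, for a general closed point $y \in Y$ the fiber $F := \tilde\pi^{-1}(y)$ is a normal projective variety on which $G$ acts with a dense orbit; that is, $F$ is $G$-almost homogeneous. The main theorem of \cite{Br19} then gives that $\Aut(F)$ is a linear algebraic group. Restriction of $Y$-automorphisms to $F$ defines a morphism of $k$-group schemes $r \colon \Aut_Y(\tX) \to \Aut(F)$, and I would complete the proof by showing that $r$ is a closed immersion; since $\Aut(F)$ is linear algebraic, this would exhibit $\Aut_Y(\tX)$ as a linear algebraic group.

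The main obstacle will be to show that $r$ has trivial kernel: an element $\sigma \in \Aut_Y(\tX)$ with $\sigma|_F = \id_F$ should be forced to equal $\id_{\tX}$. This is delicate because $\sigma$ could a priori act nontrivially on fibers of $\tilde\pi$ other than $F$. One natural approach is to exploit the dense $G$-orbit in $F$: since $\sigma$ preserves $\tilde\pi$ and fixes the $G$-orbit in $F$ pointwise, an equivariant rigidity argument together with the fact that $G$-orbits vary algebraically in families should force $\sigma$ to act trivially on a Zariski-open neighborhood of $F$ in $\tX$, and hence on all of $\tX$. A possibly cleaner alternative is to pass to the geometric generic fiber $\tX_{\bar\eta}$ over $\bar{K}$: applying \cite{Br19} there yields linear algebraicity of $\Aut(\tX_{\bar\eta})$, and one then uses the projectivity of $Y$ combined with spreading-out to transfer the finite-type and affine properties globally to the $k$-group scheme $\Aut_Y(\tX)$.
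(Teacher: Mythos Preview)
Your approach has a genuine gap: the restriction map $r \colon \Aut_Y(\tX) \to \Aut(F)$ to a single closed fiber is \emph{not} injective in general, so the strategy of embedding $\Aut_Y(\tX)$ into $\Aut(F)$ cannot succeed. For a concrete counterexample take $Y = \bP^1$ and $\tX = \bP(\cO_{\bP^1} \oplus \cO_{\bP^1}(1))$, with $G = \bG_m$ acting fiberwise by scaling the $\cO(1)$ summand; then $K = k(\bP^1)$ and every fiber is a $\bP^1$ almost homogeneous under $G$. The group $\Aut_Y(\tX)$ contains the additive group $H^0(\bP^1,\cO(1)) \cong \bG_a^2$, acting by $(l_0,l_1) \mapsto (l_0 + s\,l_1,\, l_1)$. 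Any nonzero section $s$ vanishing at a chosen point $y_0 \in \bP^1$ gives a nontrivial element of $\Aut_Y(\tX)$ that restricts to the identity on $F = \tilde\pi^{-1}(y_0)$. Thus $\ker r \neq 1$ for \emph{every} choice of closed fiber, and your ``equivariant rigidity'' sketch cannot repair this: the $G$-orbits in nearby fibers are all isomorphic to the one in $F$, yet the automorphism moves them.

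There is a second obstruction in positive characteristic: neither a general closed fiber nor the geometric generic fiber $X_{\bar\eta}$ need be normal (the paper exhibits an explicit $\bG_a$-action on $\bP^2$ with this behavior), so the main theorem of \cite{Br19}, which requires normality, does not apply to them. Your alternative route via $\Aut(X_{\bar\eta})$ therefore already stalls here; and even granting linearity of $\Aut_{\bar K}(X_{\bar\eta})$ as a $\bar K$-group, the passage to the $k$-group scheme $\Aut_Y(\tX)$ is not ``spreading out'' in any standard sense. These objects live over different bases and are related only through the injection of abstract groups $\Aut_Y(\tX)(k) \hookrightarrow \Aut_K(X_\eta)(K)$, which by itself yields no finiteness over $k$.

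The paper circumvents both problems by never leaving $X$: rather than restricting to a fiber, it constructs a big line bundle on $X$ carrying an $\Aut_Y(X)$-linearization, and then invokes the fact that the stabilizer of the class of a big line bundle in $\Aut(X)$ is linear algebraic. The construction does exploit the geometric generic fiber---via an equivariant Tits-type morphism from $X$ to a Grassmannian, whose pullback of the Pl\"ucker bundle is shown to be $f$-big---but it uses only the integrality and almost-homogeneity of $X_{\bar\eta}$, never its normality.
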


Here and later, closed subgroups of $\Aut(X)$ are
equipped with their reduced subscheme structure unless
otherwise specified. Thus, they may be identified with 
their group of $k$-rational points. 

By a theorem of Rosenlicht (see 
\cite{Rosenlicht}, and \cite{BGR} for a modern proof),
the rational functions in $K$ separate the $G$-orbit closures 
of general points of $X$. Thus, $\Aut_K(X)$ is 
the largest subgroup of $\Aut(X)$ having the same general 
orbit closures as $G$. Also, note that $K = k$ if and only
if $X$ is almost homogeneous; then $\Aut_K(X)$ is just
the full automorphism group.

The proof of Theorem \ref{thm:rel} is presented in 
Section \ref{sec:rel}. As in \cite{FZ,Br19}, the idea
is to construct a big line bundle on $X$, invariant under
$\Aut_K(X)$. To handle our relative situation, we use 
some tools from algebraic geometry over an arbitrary 
field, which seem to be unusual in this setting.

As an application of Theorem \ref{thm:rel} and its proof,
we show that $\Aut(X)$ is a linear algebraic group under
additional assumptions:

\begin{theorem}\label{thm:full}
Let $X$ be a normal projective variety, $G \subset \Aut(X)$
a linear algebraic subgroup, and $K := k(X)^G$. If $K$ has
transcendence degree $1$ over $k$, then $\Aut(X)$ is 
an algebraic group. If in addition $G$ is connected and
$K$ is not the function field of an elliptic curve, then
$\Aut(X)$ is linear.
\end{theorem}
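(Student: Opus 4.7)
The plan is to deduce Theorem \ref{thm:full} from Theorem \ref{thm:rel} by analysing how $\Aut(X)$ extends $H := \Aut_K(X)$. Since $K$ is finitely generated over the algebraically closed field $k$ of transcendence degree $1$, we have $K = k(C)$ for a unique smooth projective curve $C$, and the inclusion $K \hookrightarrow k(X)$ gives a dominant rational map $\pi : X \dashrightarrow C$; by Rosenlicht, the general fibres of $\pi$ are the closures of general $G$-orbits. Theorem \ref{thm:rel} tells us that $H$ is a linear algebraic group.

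The central step will be to produce a morphism $\phi : \Aut(X)^0 \to \Aut(C)$ with kernel $H^0$, realising the identity component as an extension
$$
1 \longrightarrow H^0 \longrightarrow \Aut(X)^0 \longrightarrow \phi(\Aut(X)^0) \longrightarrow 1.
$$
For this, I would first argue that $\Aut(X)^0$ preserves the fibration $\pi$. If $\Aut(X)^0$ acts almost homogeneously on $X$ then one is in the situation of \cite{Br19} and $\Aut(X)$ is already linear algebraic, so the interesting case is when the $\Aut(X)^0$-invariants still have transcendence degree $1$. Rigidity in $\Chow(X)$ (or a Blanchard-type lemma applied to a resolution and Stein factorisation of $\pi$) then gives that $\Aut(X)^0$ preserves the family of general $\pi$-fibres as a connected family of cycles in $\Chow(X)$, inducing $\phi$. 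The characterisation of $\ker \phi$ is immediate: $\sigma$ is in the kernel exactly when it fixes every pullback $\pi^* f$ for $f \in k(C)$, that is, when $\sigma \in H$. Since the image of $\phi$ is a closed connected subgroup of $\Aut(C)^0$---which is $\PGL_2$, an elliptic curve, or trivial---and $H^0$ is linear, the extension shows that $\Aut(X)^0$ is algebraic; and it shows $\Aut(X)^0$ to be linear whenever $C$ is not elliptic, using that any morphism from an affine algebraic group to an abelian variety is trivial, together with Chevalley's structure theorem. The extra hypothesis that $G$ is connected places $G$ inside $\Aut(X)^0$ and ensures that $\pi$ has connected general fibres, which simplifies the descent.

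The hard part will be the finiteness of $\pi_0(\Aut(X))$: an element outside $\Aut(X)^0$ may well conjugate $G$ to a non-$\Aut(X)^0$-conjugate linear algebraic subgroup and thus send $\pi$ to a genuinely different fibration of $X$, as happens for the swap automorphism of $\mathbb{P}^1 \times \mathbb{P}^1$. The strategy I would pursue is the big--line--bundle strategy already used in \cite{FZ,Br19} and in Theorem \ref{thm:rel}: combine the $H$-invariant big line bundle produced in the proof of Theorem \ref{thm:rel} with a suitable pullback $\pi^* A$ from $C$ to build a big line bundle on $X$ whose numerical class is $\Aut(X)$-invariant. Such a line bundle would embed $\Aut(X)$ in the projective automorphism group of a birational projective model, yielding an algebraic (and, when $C$ is not elliptic, linear) group structure on $\Aut(X)$ in one stroke. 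The essential technical point is to show that the $\Aut(X)$-orbit in $\NS(X)$ of the class $[F]$ of a general $\pi$-fibre---a nef class with $[F]^2 = 0$---is finite; this is where the linear algebraicity of $G$ and the one-dimensionality of $K$ must be used in an essential way, to rule out wild behaviour like the infinitely many inequivalent elliptic fibrations that an abelian surface can carry.
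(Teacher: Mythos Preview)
Your overall architecture matches the paper's: reduce to a contraction $f:X\to C$ onto a curve, invoke Theorem~\ref{thm:rel} for the relative automorphism group, and finish by producing a big line bundle with $\Aut(X)$-invariant numerical class. But you have mislocated the difficulty. You treat the finiteness of $\pi_0(\Aut(X))$ as the ``hard part'' and leave it as an open-ended programme about orbits of fibre classes in $\NS(X)$, worrying about swap-type automorphisms that move the fibration.

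The paper dissolves this problem with a single observation you overlooked. After reducing to $G$ connected and disposing of the almost-homogeneous case via \cite{Br19}, one \emph{replaces $G$ by $\Aut(X)^0$}. Since $G\subset\Aut(X)^0$ and $X$ is not almost homogeneous, $k(X)^{\Aut(X)^0}$ is an algebraically closed subfield of $K$ of transcendence degree~$1$, hence equals $K$; Lemma~\ref{lem:neutral} then shows $\Aut(X)^0$ is linear, so this replacement is legitimate. Now $\Aut(X)$ normalises its own identity component, hence stabilises $K=k(X)^{\Aut(X)^0}$, and therefore acts on $C$. The exact sequence
\[
1\longrightarrow \Aut_C(X)\longrightarrow \Aut(X)\longrightarrow \Aut(C)
\]
holds for the \emph{full} group, not just the neutral component. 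Your $\bP^1\times\bP^1$ example is precisely the almost-homogeneous case already set aside: there $\Aut(X)^0=\PGL_2\times\PGL_2$ has invariant field $k$.

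With this reduction in hand there is no separate $\pi_0$ argument. The $f$-big line bundle $L$ constructed as in the proof of Theorem~\ref{thm:rel} is $\Aut(X)$-linearised (not merely $H$-linearised), and after a further reduction to make it $f$-globally generated, $L\otimes f^*(M^{\otimes n})$ is big for $n\gg 0$. Its numerical class is $\Aut(X)$-fixed because $\Aut(C)$ fixes the class of $M^{\otimes n}$ in $\NS(C)\cong\bZ$; this gives algebraicity by \cite[Lem.~2.2]{Br19}. When $C\simeq\bP^1$, the isomorphism class of $f^*\cO_{\bP^1}(1)$ is $\Aut(X)$-fixed and \cite[Lem.~2.3]{Br19} gives linearity. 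No control of $\NS(X)$-orbits of fibre classes is needed.
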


This is again due (in essence) to Fu and Zhang when 
$X$ is a complex manifold and $K \simeq \bC(t)$, see 
\cite[Appl.~1.4]{FZ}. Another instance where this result was known 
occurs when $G$ is a torus, say $T$. Then $X$ is said to be 
a $T$-variety of complexity one; its automorphism group 
is explicitly described in \cite{AHHL}, when $K \simeq \bC(t)$
again.
 
Theorem \ref{thm:full} is proved in Section \ref{sec:full},
as well as the following consequence:

\begin{corollary}\label{cor:surface}
Let $X$ be a normal projective surface having 
a non-trivial action of a connected linear algebraic
group. Then $\Aut(X)$ is an algebraic group. 
Moreover, $\Aut(X)$ is linear unless $X$ is
birationally equivalent to $Y \times \bP^1$ 
for some elliptic curve $Y$.
\end{corollary}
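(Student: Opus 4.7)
The plan is to split according to the dimension $d$ of a general $G$-orbit, where $G$ is the given non-trivial connected linear algebraic group acting on the surface $X$. Since $G$ is connected and acts non-trivially, $d \geq 1$; since $\dim X = 2$, we have $d \in \{1, 2\}$, and by Rosenlicht's theorem $K := k(X)^G$ has transcendence degree $2 - d$ over $k$. If $d = 2$, then $X$ is almost homogeneous under $G$, so $\Aut(X)$ is linear algebraic by \cite[Thm.~1]{Br19}. If $d = 1$, then $K$ has transcendence degree $1$ over $k$, and Theorem \ref{thm:full}, applied with this connected $G$, gives that $\Aut(X)$ is an algebraic group, linear unless $K$ is the function field of an elliptic curve.

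The remaining task is: if $K \cong k(Y)$ for some elliptic curve $Y$, show that $X$ is birational to $Y \times \bP^1$. First, $K$ is algebraically closed in $k(X)$, since any $f \in k(X)$ algebraic over $K$ has a finite $G$-orbit (contained in the roots of its minimal polynomial over $K$), hence is $G$-fixed as $G$ is connected. Resolving the rational map $X \dashrightarrow Y$ induced by $K \hookrightarrow k(X)$, I obtain a morphism from a normal projective surface birational to $X$ onto $Y$; its normalized generic fiber $X_K$ is then a smooth projective curve over $K$ with function field $k(X)$, on which $G_K := G \times_k K$ acts with a dense open orbit.

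Now $G$ is a connected $1$-dimensional linear algebraic group, so $G \cong \bG_a$ or $\bG_m$. Base-changing to an algebraic closure $\bar K$, the curve $X_{\bar K}$ contains a dense image of the rational curve $G_{\bar K}$; by L\"uroth's theorem $X_{\bar K}$ is then rational, so $X_K$ has geometric genus $0$ and is a smooth conic over $K$. Since $K = k(Y)$ is the function field of a curve over the algebraically closed field $k$, Tsen's theorem gives $\mathrm{Br}(K) = 0$, whence every conic over $K$ is isomorphic to $\bP^1_K$. Consequently $X_K \cong \bP^1_K$, $k(X) \cong K(t)$, and $X$ is birational to $Y \times \bP^1$, as required.

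The hard part is the elliptic case, where one must verify that the rational fibration $X \dashrightarrow Y$ is birationally trivial. The argument rests on two ingredients that appear essentially necessary: a geometric genus bound for the generic fiber, coming from the rationality of $G \cong \bG_a$ or $\bG_m$ via L\"uroth over $\bar K$; and Tsen's theorem, to upgrade a conic over the $C_1$-field $K$ to $\bP^1_K$. All other cases reduce directly either to Theorem \ref{thm:full} or to the linearity result \cite[Thm.~1]{Br19} for almost homogeneous varieties.
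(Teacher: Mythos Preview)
Your overall structure---splitting on the general orbit dimension and invoking \cite[Thm.~1]{Br19} or Theorem~\ref{thm:full}---matches the paper's. The divergence is in the elliptic case, where you aim to show $k(X)\cong K(t)$ via L\"uroth and Tsen, whereas the paper replaces $G$ by a Borel subgroup and applies Popov's theorem \cite[Thm.~1]{Popov} to conclude directly that $k(X)$ is purely transcendental over $K=k(X)^G$.

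There are two gaps in your argument. First, the assertion ``$G$ is a connected $1$-dimensional linear algebraic group'' is unfounded: the general orbit having dimension $1$ does not force $\dim G=1$ (e.g.\ $G=\SL_2$ can act on a surface with one-dimensional orbits). This is easily repaired: any connected linear algebraic group is a rational variety, so the open $G_{\bar K}$-orbit in $X_{\bar K}$ is unirational, and L\"uroth still gives that the curve $X_{\bar K}$ has rational function field.

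The second gap is more serious in positive characteristic. You assert that the (normal, hence regular) generic fiber $X_K$ is a \emph{smooth} projective curve over $K$, and then that it is a conic. But over the imperfect field $K=k(Y)$, a regular curve need not be smooth: as noted in Remark~\ref{rem:generic}, the geometric generic fiber $X_{\bar K}$ can be a singular (e.g.\ cuspidal) rational curve, in which case $X_K$ has arithmetic genus $p_a(X_K)=p_a(X_{\bar K})>0$ and is not a conic. Tsen's theorem, which you invoke through $\mathrm{Br}(K)=0$, only trivializes \emph{smooth} forms of $\bP^1$; it says nothing about such regular but non-smooth curves. So in characteristic $p$ your argument does not establish $k(X)\cong K(t)$. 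The paper sidesteps this entirely: passing to a Borel subgroup $B$ (which still has $k(X)^B=K$, since $K$ is algebraically closed in $k(X)$ and $B$ acts non-trivially), Popov's theorem yields $k(X)\cong K(t)$ in one stroke, uniformly in all characteristics.
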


For smooth rational surfaces, this result is due to Harbourne
(see \cite[Cor.~1.4]{Harbourne}); for ruled surfaces over
an elliptic curve, it also follows from the explicit description
of automorphism groups obtained by Maruyama 
(see \cite[Thm.~3]{Maruyama}). Note that the linearity
assumption for the acting group cannot be suppressed, as shown
again by the example of a product of two isogenous elliptic
curves. Also, there exist (much more elaborate) examples of
smooth projective surfaces having a discrete, non-finitely 
generated automorphism group (see \cite{DO,Oguiso}). 

Theorem \ref{thm:rel} may be viewed as a first step 
towards ``Galois theory'' for linear algebraic subgroups
of $\Aut(X)$. With this in mind, it would be interesting 
to characterize the fields of invariants of linear algebraic groups
among the subfields $K \subset k(X)$, and the linear algebraic 
groups which occur as $\Aut_K(X)$. 
In characteristic $0$, it is easy to see that a subfield 
$K \subset k(X)$ is the field of invariants of a connected 
linear algebraic group if and only if $K$ is algebraically
closed in $k(X)$, the $k(X)$-vector space 
$\Der_K(k(X))$ is spanned by global vector fields, and 
$X$ is unirational over $\bar{K}$.

\medskip

\noindent
{\bf Notation and conventions.}
The ground field $k$ is algebraically closed, of arbitrary 
characteristic.
A \emph{variety} is an integral separated scheme of finite type.
An \emph{algebraic group} $G$ is a group scheme of finite type.
The \emph{neutral component} $G^0$ is the connected
component of $G$ containing the neutral element.
We say that $G$ is \emph{linear} if it is smooth and affine.

\section{Proof of Theorem \ref{thm:rel}}
\label{sec:rel}

It proceeds via a sequence of reduction steps and lemmas.
We begin with two easy and useful observations:

\begin{lemma}\label{lem:closed}

\begin{enumerate}

\item[{\rm (i)}] $\Aut_K(X)$ is a closed subgroup of $\Aut(X)$.

\item[{\rm (ii)}] If $G$ is connected, then $K$ is algebraically
closed in $k(X)$.

\end{enumerate}

\end{lemma}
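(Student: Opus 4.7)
\medskip

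\noindent \textbf{Proof proposal.}
The two statements are linked: I will derive (ii) from (i) by a connectedness argument, so I begin with (i).

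For (i), the plan is to realize $\Aut_K(X)$ as an intersection of stabilizers of individual rational functions. Given $f \in K$, view it as a rational map $f : X \dashrightarrow \bP^1$ and let $\Gamma_f \subset X \times \bP^1$ denote the closure of its graph. The natural action of $\Aut(X)$ on $X \times \bP^1$, trivial on the second factor, induces an action on the Hilbert scheme of $X \times \bP^1$, and the stabilizer of the point $[\Gamma_f]$ is a closed subgroup of $\Aut(X)$. A short verification shows that $g \in \Aut(X)$ stabilizes $\Gamma_f$ if and only if $g^* f = f$ in $k(X)$. Intersecting these closed subgroups as $f$ ranges over $K$ exhibits $\Aut_K(X)$ as a closed subgroup of $\Aut(X)$.

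For (ii), assume $G$ is connected and let $f \in k(X)$ be algebraic over $K$, with minimal polynomial $P \in K[T]$. Since $G$ fixes the coefficients of $P$, the $G$-action on $k(X)$ permutes the finitely many roots of $P$ lying in $k(X)$; in particular the $G$-orbit of $f$ is a finite set $\{f_1,\dots,f_m\}$. Setting $H := \{g \in G : g^*f = f\}$, the argument of (i) shows that $H$ is closed in $G$, and the fibers $G_i := \{g \in G : g^*f = f_i\}$ of the orbit map are precisely the cosets of $H$, a finite pairwise disjoint family of closed subsets covering $G$. A connected scheme is not a disjoint union of more than one nonempty closed subset, so $G = H$ and hence $f \in K$.

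The main point requiring care is the closedness assertion in (i): a priori, $g^*f = f$ involves a rational function, not a regular one, and so cannot be encoded by directly evaluating a morphism out of $\Aut(X)$. Replacing $f$ by its graph closure in the projective variety $X \times \bP^1$, and transferring the question to the Hilbert scheme of $X \times \bP^1$ (on which $\Aut(X)$ acts algebraically and point stabilizers are automatically closed), is what sidesteps this domain-of-definition issue. Once (i) is in hand, (ii) is essentially a topological observation.
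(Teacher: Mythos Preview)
Your proof is correct, and part (ii) matches the paper's argument essentially verbatim (closed subgroup of finite index in a connected group is the whole group). Part (i), however, takes a genuinely different route. The paper does not go through the Hilbert scheme of $X \times \bP^1$; instead it works directly with the divisor of $f$. Writing $D_0$ and $D_\infty$ for the schemes of zeroes and poles of $f$, the paper first identifies the subgroup $\Aut_{(f)}(X) := \{ g : g^*f = \lambda(g) f \text{ for some } \lambda(g) \in k^*\}$ with the common stabilizer $\Aut(X, D_0, D_\infty)$, which is visibly closed in $\Aut(X)$. This subgroup preserves the open set $U = X \setminus (D_0 \cup D_\infty)$, on which $f$ is a genuine regular function, so the further condition $g^*f = f$ cuts out a closed subgroup of $\Aut_{(f)}(X)$. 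Your graph-closure argument is more direct---one step instead of two---and avoids the intermediate ``up to scalar'' stabilizer, at the cost of invoking the representability of $\Hilb_{X \times \bP^1}$ and the induced $\Aut(X)$-action on it. The paper's approach is more elementary and has the side benefit of isolating $\Aut_{(f)}(X)$, though that group is not used elsewhere.
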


\begin{proof}
(i) It suffices to show that the stabilizer $\Aut_f(X)$ is closed in 
$\Aut(X)$ for any non-zero $f \in k(X)$. Let
\[ \Aut_{(f)}(X) := \{ g \in \Aut(X) ~\vert~ g^*(f) = \lambda(g) f
\text{ for some } \lambda(g) \in k^* \}. \]
Then $\Aut_f(X) \subset \Aut_{(f)}(X) \subset \Aut(X)$. 

Denote by $D_0$ (resp.~$D_{\infty}$) the scheme of zeroes 
(resp.~poles) of $f$, and by $\Aut(X,D_0,D_{\infty}) \subset \Aut(X)$ 
the common stabilizer of these subschemes of $X$. 
We claim that $\Aut_{(f)}(X) = \Aut(X,D_0,D_{\infty})$. Indeed, 
the inclusion $\Aut_{(f)}(X) \subset \Aut(X,D_0,D_{\infty})$ 
is obvious, and the opposite inclusion follows from the 
fact that every $g \in \Aut(X,D_0,D_{\infty})$ satisfies
$\div(g^*(f)) = \div(f)$.

By the claim, $\Aut_{(f)}(X)$ is closed in $\Aut(X)$. Moreover,
$\Aut_{(f)}(X)$ stabilizes the open subset 
$U := X \setminus (D_0 \cup D_{\infty}) \subset X$, and
$\Aut_f(X)$ is the stabilizer of $f \in \cO(U)$. So 
$\Aut_f(X)$ is closed in $\Aut_{(f)}(X)$.

(ii) Let $f \in k(X)$ be algebraic over $K$. Then the stabilizer 
of $f$ in $G$ is a closed subgroup of finite index, and hence 
is the whole $G$. So $f \in K$.

\end{proof}

\medskip \noindent
{\bf Step 1.} We may assume that $G$ is connected.

\begin{proof}
Denote by $\pi_0(G) := G/G^0$ the group of components.
Then the invariant field $L := k(X)^{G^0}$ is equipped
with an action of the finite group $\pi_0(G)$, and
$L^{\pi_0(G)} = K$. Thus, $L/K$ is a Galois extension with
Galois group a quotient of $\pi_0(G)$. As $G^0$ is connected, 
$L$ is algebraically closed in $k(X)$ (Lemma \ref{lem:closed}). 
Therefore, $L$ is the algebraic closure of $K$ in $k(X)$, 
and hence is stable under $\Aut_K(X)$. Since the composition 
$G \to \Aut_K(X) \to \Aut_K(L)$ 
is surjective and sends $G^0$ to the identity, this yields 
an exact sequence of algebraic groups
\[ 1 \longrightarrow \Aut_L(X) \longrightarrow \Aut_K(X)
\longrightarrow \Aut_K(L)  \longrightarrow 1, \] 
and in turn the assertion in view of Lemma \ref{lem:closed}
again.
\end{proof}

\begin{lemma}\label{lem:neutral}
$\Aut_K(X)^0$ is a linear algebraic group.
\end{lemma}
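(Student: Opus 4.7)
The strategy, following \cite{FZ,Br19}, is to construct a big line bundle on $X$ invariant under $H := \Aut_K(X)^0$ and then to conclude via the induced representation of $H$ on its global sections. By Step~1 we may assume $G$ is connected, and then by Lemma~\ref{lem:closed}(ii) the field $K$ is algebraically closed in $k(X)$.

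The first reduction is to turn the inclusion $K \hookrightarrow k(X)$ into an honest morphism. I would choose a normal projective variety $Y$ with $k(Y) = K$, and work (via the graph of the rational map or an equivariant modification) with a model on which the Rosenlicht quotient is a morphism $\pi : X \to Y$. Any $h \in H$ fixes $K$ pointwise, hence descends to an automorphism of $Y$ that is the identity on $k(Y)$ and therefore on $Y$ itself. Consequently $H$ preserves the fibers of $\pi$ and acts on the generic fiber $X_\eta := X \times_Y \Spec(K)$ as a group of $K$-scheme automorphisms; this action is faithful, since $X_\eta$ is schematically dense in $X$.

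The central step is to apply the almost-homogeneous result \cite[Thm.~1]{Br19} to the $K$-variety $X_\eta$. Indeed $X_\eta$ is normal, projective, and geometrically integral over $K$, and the hypothesis $K = k(X)^G$ translates into the statement that $G_K := G \times_k K$ acts on $X_\eta$ with a dense open orbit, i.e., $X_\eta$ is almost homogeneous over $K$. A version of \emph{loc.~cit.}\ valid over an arbitrary base field should yield a big line bundle $L_\eta$ on $X_\eta$ that is invariant under $\Aut(X_\eta/K)$, and in particular under the image of $H$. I would then spread $L_\eta$ to a line bundle $L_0$ on $X$ (using that on the normal variety $X$ any line bundle on the generic fiber extends, at least up to torsion), and set $L := L_0 \otimes \pi^* M^{\otimes n}$ for an ample $M$ on $Y$ and $n$ large. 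Since $H$ acts trivially on $Y$, the factor $\pi^* M^{\otimes n}$ is automatically $H$-invariant, so $L$ is $H$-invariant; an Iitaka-type criterion (bigness of the restriction to $X_\eta$ together with ampleness of $M$) yields bigness of $L$ on $X$.

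Once such a big $H$-invariant $L$ is available, I would replace it by a sufficiently divisible power that admits an $H$-linearization (permissible since $H$ is connected and $X$ is normal projective), and obtain a representation $H \to \GL(H^0(X, L^{\otimes m}))$ together with an equivariant rational map $\varphi : X \dashrightarrow \bP(H^0(X, L^{\otimes m})^\vee)$. Bigness of $L$ makes $\varphi$ birational onto its image for $m$ large, so the induced map $H \to \PGL(H^0(X, L^{\otimes m}))$ is injective on points: any element of the kernel fixes $\varphi(X)$ pointwise, hence all of $X$. This realizes $H$ as a subgroup of a linear algebraic group, so $H$ is itself linear algebraic. The main obstacle, and the point requiring genuinely new input relative to \cite{Br19}, is the central step: carrying out the almost-homogeneous construction over the non-algebraically closed field $K$ and then spreading the resulting line bundle back to $X$ while retaining both $H$-invariance and bigness.
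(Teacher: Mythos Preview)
Your proposal attacks the wrong target with far too heavy machinery. The lemma only asks that the \emph{neutral component} $H := \Aut_K(X)^0$ be linear; since $H$ is already a connected algebraic group, by Chevalley's structure theorem the issue is solely whether $H = H_{\aff}$, i.e.\ whether the abelian-variety quotient $H/H_{\aff}$ is trivial. The paper settles this in a few lines: by Rosenlicht's theorem the $G$-orbit of a general $x$ is open in the $H$-orbit, so $H\cdot x$ is unirational (as $G$ is connected linear), hence $\Alb(H\cdot x)=0$; but $\Alb(H\cdot x) = H/(H_{\aff}\,H_x)$, and since the stabilizer $H_x$ is affine the map $H/H_{\aff}\to H/(H_{\aff}\,H_x)$ is an isogeny, forcing $H = H_{\aff}$. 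No line bundles, no generic fibers, no spreading out.

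What you have sketched is essentially the paper's strategy for the \emph{full} Theorem~\ref{thm:rel}, and in the paper's logic Lemma~\ref{lem:neutral} is a prerequisite for that strategy (Step~2 uses it to replace $G$ by $\Aut_K(X)^0$ before the big-line-bundle construction begins in Steps~3--6). So you have inverted the dependency. Moreover, your central step is not a proof but a hope: you write that ``a version of \emph{loc.\ cit.}\ valid over an arbitrary base field should yield'' the big line bundle on $X_\eta$, and you flag this yourself as the main obstacle. In the paper this obstacle is real and occupies Steps~4--6 (passing to a finite Galois cover to obtain a rational point, the Tits morphism in both characteristics, and the $f$-bigness argument); it is not a black box one can invoke. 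Finally, ``spreading $L_\eta$ to $X$'' while preserving $H$-invariance is delicate: a line bundle on $X_\eta$ extends to $X$ only up to modification along vertical divisors, and there is no reason an arbitrary extension is $H$-linearizable without further work.
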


\begin{proof}
Let $\tG := \Aut_K(X)^0$. Then $G \subset \tG$ and
$k(X)^{\tG} = K$. In view of Rosenlicht's theorem mentioned
in the introduction, there exists a dense open $G$-stable 
subset $U \subset X$ such that the orbit $G \cdot x$ is open 
in $\tG \cdot x$ for all $x \in U$. Since $G$ is connected 
and linear, it follows that the variety $\tG \cdot x$ is unirational, 
and hence its Albanese variety $\Alb(\tG \cdot x)$ is trivial. 

We now recall a group-theoretic description of 
$\Alb(\tG \cdot x)$. By Chevalley's structure theorem,
$\tG$ has a largest connected linear normal  subgroup 
$\tG_{\aff}$ and the quotient $\tG/\tG_{\aff}$ is an abelian 
variety (see e.g. \cite[Thm.~1.1.1]{BSU}). Denote by 
$H = \tG_x$ the stabilizer of $x$; then 
$\tG/(\tG_{\aff} \, H)$ is an abelian variety as well. 
As the Albanese morphism of $\tG$ (resp.~of
$\tG \cdot x \simeq \tG/H$) is invariant by $\tG_{\aff}$, 
it follows readily that $\Alb(\tG) = \tG/\tG_{\aff}$ and 
$\Alb(\tG \cdot x) = \tG/(\tG_{\aff} \, H)$. Thus, 
$\tG = \tG_{\aff} \, H$. But since $H$ is affine
(see e.g. \cite[Lem.~2.3.2]{BSU}), the natural map 
$\tG/\tG_{\aff} \to \tG/\tG_{\aff} \, H$ is an isogeny.
Therefore, $\tG = \tG_{\aff}$.
\end{proof}

\medskip \noindent
{\bf Step 2.} In view of Step 1 and Lemma \ref{lem:neutral}, 
we may assume that $G = \Aut_K(X)^0$. 
Then $K$ is algebraically closed in $k(X)$ by Lemma
\ref{lem:closed}.

We may further assume that there exists a morphism 
$f : X \to Y$, where $Y$ is a normal projective variety 
satisfying the following conditions: 
$f$ induces an isomorphism $k(Y) \simeq K$, 
we have $f_*(\cO_X) = \cO_Y$, and $\Aut_K(X)$ is isomorphic
to a closed subgroup of the relative automorphism group
$\Aut_Y(X)$, containing $\Aut_Y(X)^0$. 

\begin{proof}
Choose generators $f_1,\ldots,f_n$ of the field extension
$K/k$. This defines a rational map 
$(f_1,\ldots,f_n) : X \dasharrow \bP^{n-1}$ and hence 
a rational map $f : X \dasharrow Y$, where $Y$ is a normal
projective variety and $f$ induces an isomorphism 
$k(Y) \simeq K$.
The normalization of the graph of $f$ yields a normal
projective variety $X'$ equipped with morphisms
\[ f' : X' \longrightarrow Y, \quad g : X' \longrightarrow X \] 
such that $g$ is birational and $f' = f \circ g$. Thus, 
$f'$ also induces an isomorphism $k(Y) \simeq K$.
Consider the Stein factorization of $f'$ as
$X' \stackrel{\varphi}{\longrightarrow} Z
\stackrel{\psi}{\longrightarrow} Y$.
Then $\psi$ is finite, and hence so is the extension 
$k(Z)/k(Y)$. But $k(Z) \subset k(X') \simeq k(X)$,
and $k(Y) \simeq K$ is algebraically closed in $k(X)$.
Therefore, $\psi$ is birational. As $Y$ is normal,
$\psi$ is an isomorphism by Zariski's Main Theorem.
Thus, $f_*(\cO_X) = \cO_Y$.

By construction, $\Aut_K(X)$ acts on $X'$ and $f'$ is invariant 
under this action. This yields a homomorphism 
\[ u : \Aut_K(X) \longrightarrow \Aut_Y(X'). \] 
On the other hand, Blanchard's lemma (see e.g.
\cite[Prop.~4.2.1]{BSU}) yields a homomorphism
$\Aut(X')^0 \to \Aut(X)^0$, which restricts to a homomorphism 
\[ g_* : \Aut_Y(X')^0 \longrightarrow \Aut_K(X)^0. \]
Since $g$ is birational, $g_*$ is the inverse of the
restriction to neutral components 
$u^0 : \Aut_K(X)^0 \to \Aut_Y(X')^0$. 
In particular, the image of $u$ contains $\Aut_Y(X')^0$,
and hence is closed in $\Aut_Y(X')$.
\end{proof}

\medskip \noindent
{\bf Step 3.} In view of Step 2, we now consider a contraction
$f : X \to Y$, i.e., a morphism of normal projective varieties
such that $f_*(\cO_X) = \cO_Y$. We assume that the algebraic group
$G := \Aut_Y(X)^0$ is linear, and $f$ induces an isomorphism
$k(Y) \simeq K$. To prove Theorem \ref{thm:rel}, it suffices 
to show that $\Aut_Y(X)$ is a linear algebraic group. 

It suffices in turn to construct a big line bundle 
on $X$ which admits an $\Aut_Y(X)$-linearization
(recall that a line bundle $L$ on a projective variety 
$Z$ is big if there exists $c > 0$ such that 
$\dim H^0(Z,L^{\otimes n}) > c \, n^{\dim(Z)}$
for $n \gg 0$; see e.g. \cite[Lem.~2.60]{KM} for further
characterizations of bigness). Indeed,
$\Aut_Y(X)$ is closed in $\Aut(X)$ (as follows from 
Lemma \ref{lem:closed}), and hence in the stabilizer 
$\Aut(X,[L])$ of the isomorphism class of $L$ for any 
$\Aut_Y(X)$-linearized line bundle $L$. If in addition
$L$ is big, then $\Aut(X,[L])$ is a linear algebraic group 
in view of \cite[Lem.~2.3]{Br19}. The desired line bundle 
will be constructed in Step 6, after further preparations.

Denote by $\eta$ the generic point of $Y$ and by $X_{\eta}$
the generic fiber of $f$. Then $X_{\eta}$ is a projective
scheme over $k(\eta) = K$, equipped with an action of
the $K$-group scheme $G_K$ (since $G$ acts on $X$
by relative automorphisms). 
Likewise, the geometric generic fiber
$X_{\bar{\eta}}$ is a projective scheme over 
$k(\bar{\eta}) = \bar{K}$, equipped with an action of
$G_{\bar{K}}$ (a linear algebraic group over $\bar{K}$).

\begin{lemma}\label{lem:generic}
With the above assumptions, $X_{\eta}$ is normal
and geometrically integral. Moreover, $X_{\bar{\eta}}$
is almost homogeneous under $G_{\bar{\eta}}$ and
we have $k(X_{\bar{\eta}}) = k(X) \otimes_K \bar{K}$.
\end{lemma}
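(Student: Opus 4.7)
The plan is to establish the four assertions in stages, using the $G$-action (via Rosenlicht's theorem) as the key input for geometric properties of the generic fiber. Throughout, I will use from Step~2 that $K$ is algebraically closed in $k(X)$.

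First I would dispose of the easy claims. Normality of $X_\eta$ is immediate, since its local rings are localizations of local rings of $X$. Integrality is clear as well: $f$ is surjective (a consequence of $f_*\mathcal{O}_X = \mathcal{O}_Y$), so the generic point of $X$ maps to $\eta$ and lies in $X_\eta$. Geometric connectedness of $X_\eta$ follows from flat base change applied to $f_*\mathcal{O}_X = \mathcal{O}_Y$, which gives $H^0(X_\eta, \mathcal{O}_{X_\eta}) = K$; combined with properness over $K$, this yields $X_\eta$ geometrically connected. Moreover, $X_\eta$ is geometrically irreducible, since $K$ is algebraically closed in $k(X) = k(X_\eta)$.

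The crux is geometric reducedness. I would argue that the generic fiber is almost homogeneous under $G_K$ with a smooth open orbit. By Rosenlicht's theorem, general closed fibers of $f$ are single $G$-orbits (as $K = k(X)^G = k(Y)$, the birational quotient $X \dashrightarrow X/G$ agrees with $f$). Equivalently, the action morphism
\[ \alpha \colon G \times X \longrightarrow X \times_Y X, \qquad (g,x) \longmapsto (x, gx), \]
is dominant. Taking generic fibers over $Y$ (using that $X_\eta \times_K X_\eta$ is irreducible by geometric irreducibility of $X_\eta$), the morphism $G_K \times_K X_\eta \to X_\eta \times_K X_\eta$ is again dominant, so $X_\eta$ contains a dense $G_K$-orbit. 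This orbit identifies with a scheme-theoretic quotient $G_K/H$ for some closed subgroup scheme $H \subset G_K$, which is smooth over $K$ because $G_K$ is smooth. Sitting as a dense open subscheme of the reduced scheme $X_\eta$, it witnesses that $X_\eta$ is smooth at its generic point; hence $k(X_\eta)/K$ is separable and $X_\eta$ is geometrically reduced, hence geometrically integral.

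The remaining claims then follow formally: $X_{\bar\eta}$ is integral; the extension $k(X)/K$ is regular ($K$ algebraically closed in $k(X)$, plus the separability just established), so $k(X) \otimes_K \bar K$ is a field, which must coincide with the function field of $X_{\bar\eta}$; and the open $G_K$-orbit in $X_\eta$ base-changes to an open $G_{\bar K}$-orbit in $X_{\bar\eta}$, giving almost homogeneity. The main obstacle I anticipate is the Rosenlicht step, specifically the identification of the dense open $G_K$-orbit in $X_\eta$ as a smooth subscheme of the form $G_K/H$: one must verify that the orbit map, a priori defined only on points, realizes as a locally closed immersion into the reduced scheme $X_\eta$, and that dominance of $\alpha$ over $Y$ really does pass to dominance on the generic fiber.
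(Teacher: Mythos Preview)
Your treatment of normality, geometric irreducibility, and the function-field identity $k(X_{\bar\eta}) = k(X)\otimes_K \bar K$ is correct and essentially matches the paper. The substantive divergence is in how you obtain geometric reducedness of $X_\eta$ (equivalently, separability of $k(X)/K$), and the obstacle you flag at the end is a genuine gap.

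The paper does not attempt to extract separability from Rosenlicht. It invokes a result of Springer (\cite[Lem.~IV.1.5]{Springer}): for a connected linear algebraic group $G$ acting on a variety over an algebraically closed field, the extension $k(X)/k(X)^G$ is separable. This is a nontrivial fact in positive characteristic, proved by reducing to actions of $\bG_m$ and $\bG_a$; it is not a formal consequence of Rosenlicht's theorem. With separability in hand, the paper gets geometric reducedness, combines it with geometric irreducibility to obtain geometric integrality, and only \emph{then} proves almost homogeneity of $X_{\bar\eta}$ by computing $\bar K(X_{\bar\eta})^{G_{\bar\eta}} = \bar K$ and applying Rosenlicht over $\bar K$.

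Your route tries to reverse this order: produce an open $G_K$-orbit in $X_\eta$, identify it with $G_K/H$, and read off smoothness over $K$. But writing the orbit as $G_K/H$ requires a $K$-rational point (or at least a point over a separable extension of $K$), and none is available before geometric reducedness is known; indeed, the density of $X(K^s)$ in $X_{\bar\eta}$ used later in Step~4 of the paper rests precisely on this lemma. Dominance of $\alpha_K$ only tells you that the orbit of the \emph{generic} point of $X_\eta$—a $k(X)$-point of $X_\eta\times_K k(X)$—is dense; this yields a smooth open in $X_\eta\times_K k(X)$ over $k(X)$, which does not by itself force $X_\eta$ to be generically smooth over $K$ (a reduced dense open in an irreducible scheme need not make the whole scheme reduced). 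So as written the argument does not close, and the missing input is exactly Springer's lemma.
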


\begin{proof}
By \cite[Lem.~IV.1.5]{Springer}, the extension $k(X)/K$ 
is separable. As a consequence, the ring 
$k(X) \otimes_K \bar{K}$ is reduced (see e.g. 
\cite[10.42.5]{SP}). So $X_{\eta}$ is geometrically 
reduced. As $K$ is algebraically closed in $k(X)$, 
the spectrum of $k(X) \otimes_K \bar{K}$ is irreducible
(see e.g. \cite[10.46.8]{SP}). Hence $X_{\eta}$ is
geometrically irreducible and the field of rational functions
$\bar{K}(X_{\bar{\eta}})$ equals  $k(X) \otimes_K \bar{K}$. 
Thus,
\[ \bar{K}(X_{\bar{\eta}})^{G_{\bar{\eta}}} = 
(k(X) \otimes_K \bar{K})^{G_{\bar{\eta}}} \subset
(k(X) \otimes_K \bar{K})^G = \bar{K}, \]
where $G$ is identified with its group of $k$-rational points.
So $\bar{K}(X_{\bar{\eta}})^{G_{\bar{\eta}}} = k(\bar{\eta})$.
By Rosenlicht's theorem, it follows that $X_{\bar{\eta}}$
is almost homogeneous under $G_{\bar{\eta}}$.

It remains to show that $X_{\eta}$ is normal. Consider
an open affine subset $V$ of $Y$ and an open
affine cover $(U_i)$ of $f^{-1}(V)$. Then the 
$(U_i)_{\eta}$ form an open affine cover of $X_{\eta}$
and $\cO((U_i)_{\eta}) = \cO(U_i) \otimes_{\cO(V)} k(\eta)$
is a localization of $\cO(U_i)$, hence a normal domain.
\end{proof}

\begin{remark}\label{rem:generic}
The generic fiber $X_{\eta}$ is geometrically normal if
$\charac(k) = 0$. But this fails if $\charac(k) = p > 0$, as
shown by the following example: let $G = \bG_a$ act
on $X = \bP^2$ via 
$t \cdot [x : y : z] = [x + t y + t^{2p} z :  y : z]$.
Then $K = k(\frac{y}{z})$ and the $G$-orbit closure
of $[x : y : z]$ is singular at $\infty$ whenever $z \neq 0$. 
Likewise, the geometric generic fiber 
$X \times_{\Spec(K)}\Spec(\bar{K})$ is a singular curve.
\end{remark}

\medskip \noindent
{\bf Step 4.} We may further assume that $X(k(\eta))$
contains a point $x_0$ such that the orbit 
$G_{\bar{\eta}} \cdot x_0$ is open in $X_{\bar{\eta}}$.

\begin{proof}
Denote by $K^s$ the separable closure of $K$ in $\bar{K}$. 
Then $X(K^s)$ is dense in $X_{\bar{\eta}} = X_{\bar{K}}$,
since the latter is integral (Lemma \ref{lem:generic}). 
So we may find $x_0 \in X(K^s)$ such that 
$G_{\bar{\eta}} \cdot x_0$ is open in $X_{\bar{\eta}}$.
Then $x_0 \in X(K')$ for some finite Galois extension $K'/K$. 

Denote by $\Gamma$ the Galois group of $K'/K$, by
$Y'$ the normalization of $Y$ in $K'$, and by $X'$ 
the normalization of $X$ in $k(X) \otimes_K K'$ 
(the latter is a field as a consequence of Lemma 
\ref{lem:generic}). Then we have a commutative square
\begin{equation}\label{eqn:square} \xymatrix{
X' \ar[r]^-{f'} \ar[d]_g & Y' \ar[d]_h\\
X \ar[r]^-f & Y, \\
} \end{equation}
where $g,h$ are the quotients by $\Gamma$. Denoting by $\eta'$ 
the generic point of $Y'$, this yields a commutative square
\begin{equation}\label{eqn:generic} \xymatrix{
X'_{\eta'} \ar[r] \ar[d] & \eta' \ar[d] \\
X_{\eta} \ar[r] & \eta, \\
} \end{equation}
where the vertical arrows are quotients by $\Gamma$ again, and
$x_0$ is a section of the top horizontal arrow. Since the right 
vertical arrow is a $\Gamma$-torsor, so is the left vertical arrow
and the square (\ref{eqn:generic}) is cartesian. Therefore,
\[ X'_{\overline{\eta'}} = X'_{\eta'} \times_{\eta'} \overline{\eta'} 
= (X_{\eta} \times_{\eta} \eta') \times_{\eta'} \overline{\eta'} 
= X_{\eta} \times_{\eta} \overline{\eta'}
= X_{\eta} \times_{\eta} \bar{\eta} = X_{\bar{\eta}}. \]
In particular, $X'_{\eta'}$ is normal and geometrically integral
(Lemma \ref{lem:generic} again).

It follows that $f'_*(\cO_{X'}) =\cO_{Y'}$ by an argument of Stein
factorization as in Step 2. More specifically, $f'$ is the composition
\[ X' \longrightarrow {\rm Spec} \, f'_*(\cO_{X'}) =: Z' 
\stackrel{g'}{\longrightarrow} Y', \]
where $Z'$ is a variety and $g'$ is finite. Moreover, 
$Z'_{\eta'}$ is geometrically integral (since so is $X'_{\eta'}$)
and the finite morphism $Z'_{\eta'} \to \eta'$ has a section 
(since so does $X'_{\eta'} \to \eta'$).
Thus, $g'$ is an isomorphism at $\eta'$. In view of the normality 
of $Y'$ and Zariski's Main Theorem, we conclude that $g'$ is 
an isomorphism.

Next, we construct an isomorphism 
\begin{equation}\label{eqn:iso} 
 \Aut^{\Gamma}(X') \stackrel{\simeq}{\longrightarrow} \Aut(X), 
\end{equation}
where the right-hand side denotes the subgroup of $\Gamma$-invariants 
in $\Aut(X')$. For any scheme $S$, (\ref{eqn:square}) yields 
a commutative square
\[ \xymatrix{
X' \times S \ar[r] \ar[d] & Y' \times S \ar[d] \\
X \times S \ar[r] & Y \times S, \\
} \]
where the vertical arrows are again quotients by $\Gamma$.
Since these quotients are categorical (see e.g.
\cite[\S 12, Thm.~1]{Mumford}), every $\Gamma$-equivariant
automorphism of $X' \times S$ over $S$ induces 
an automorphism of $X \times S$ over $S$. This yields 
a morphism of (abstract) groups
\[ \Aut^{\Gamma}_S(X' \times S) \longrightarrow \Aut_S(X \times S) \]
which is clearly functorial in $S$. Thus, we obtain a morphism
of automorphism group schemes
$u : \Aut^{\Gamma}_{X'} \to \Aut_X$ with an obvious
notation. The induced morphism of Lie algebras is the natural map
$\Der^{\Gamma}(\cO_{X'}) \to \Der(\cO_X)$
with an obvious notation again. The kernel of this map is
contained in $\Der_{k(X)}(k(X'))$, and hence is zero since
$k(X')$ is separable algebraic over $k(X)$. 
Restricting $u$ to the reduced subscheme of $\Aut^{\Gamma}_{X'}$
yields a homomorphism 
\[ v : \Aut^{\Gamma}(X') \longrightarrow \Aut(X) \]
such that the induced morphism of Lie algebras is injective; 
thus, $v$ is \'etale. Its set-theoretic kernel is contained in 
$\Aut^{\Gamma}_{k(X)}(k(X'))$, and hence is trivial by 
Galois theory.  So $v$ is a closed immersion. 
Moreover, for any $g \in \Aut(X)$, we have a 
$\Gamma$-equivariant automorphism $g \otimes \id$ of
$k(X) \otimes_K  K' = k(X')$, which stabilizes the normalization
of $\cO_X$ and hence yields a lift of $g$ in
$\Aut^{\Gamma}(X')$. So $v$ is surjective on $k$-rational
points. This yields the desired isomorphism (\ref{eqn:iso}).

By construction, $v$ restricts to an isomorphism
\begin{equation}\label{eqn:isorel}
\Aut^{\Gamma}_{Y'}(X') \stackrel{\simeq}{\longrightarrow} 
\Aut_Y(X).
\end{equation} 
In particular, if $\Aut_{Y'}(X')$ is linear algebraic, then 
so is $\Aut_Y(X)$.

Since $G = \Aut_Y(X)^0$ has an open orbit in the general 
fibers of $f$, it follows that the connected algebraic group
$G' := \Aut_{Y'}(X')^0$ has an open orbit in the
general fibers of $f'$. As a consequence, $K' = k(X')^{G'}$.
Likewise, the orbit $G'_{\overline{\eta'}} \cdot x_0$ is open 
in $X'_{\overline{\eta'}}$. 

To complete the proof, it remains to show that if
$X'$ admits a big line bundle $L'$ which is 
$\Aut_{Y'}(X')$-linearized, then $X$ admits 
a big line bundle $L$ which is $\Aut_Y(X)$-linearized. 
This will follow from a norm argument. More specifically, let  
$M := \otimes_{\gamma \in \Gamma} \, \gamma^*(L')$.
Then $M$ is a big line bundle on $X'$ and is
$\Aut_{Y'}(X')^{\Gamma}$-linearized. Moreover, 
$M = g^*(L)$ for a line bundle $L$ on $X$ 
(the norm of $M$, see \cite[II.6.5]{EGA}); we have
$L = g_*(M)^{\Gamma}$. Thus, $L$ is 
$\Aut_Y(X)$-linearized in view of the isomorphism
(\ref{eqn:isorel}). Furthermore, we have 
\[ H^0(X',M^{\otimes n}) = H^0(X', g^*(L)^{\otimes n})
= H^0(X,L^{\otimes n} \otimes g_*(\cO_{X'})) \]
for any integer $n$. Since $g_*(\cO_{X'})^{\Gamma} = \cO_X$,
it follows that  
\[ H^0(X',M^{\otimes n})^{\Gamma} = H^0(X, L^{\otimes n}). \] 
Thus, the section ring
\[ R(X,L):= \bigoplus_{n = 0}^{\infty} \, H^0(X,L^{\otimes n}) \]
satisfies $R(X,L) = R(X',M)^{\Gamma}$. As a consequence,
the fraction fields of the domains $R(X,L)$ and $R(X',M)$ have 
the same transcendence degree over $k$. Since $M$ is big,
it follows that $L$ is big as well. 
\end{proof}

\medskip \noindent
{\bf Step 5.} Let $d : = \dim(X) - \dim(Y)$; this is the
maximal dimension of the $G$-orbits in $X$.
Let $X_0$ denote the set of $x \in X$ such that
the orbit $G \cdot x$ has dimension $d$; then $X_0$ 
is an open $G$-stable subset of $X$. Since $\Aut_Y(X)$
normalizes $G$, it stabilizes $X_0$ as well.

We first consider the case where $\charac(k) = 0$. Denote 
by $\fg$ the Lie algebra of $G$; then $\Aut_Y(X)$ acts 
on $\fg$ via its conjugation action on $G$. Consider 
the Tits morphism (see \cite{Haboush,HO})
\[ \tau_0 : X_0 \longrightarrow \Grass^d(\fg), 
\quad x \longmapsto \fg_x, \]
where $\fg_x \subset \fg$ denotes the Lie algebra of the
stabilizer of $x$, and $\Grass^d(\fg)$ stands for the Grassmannian 
of subspaces of $\fg$ of codimension $d$. Then $\tau_0$ 
is equivariant for the natural actions of $\Aut_Y(X)$.
We view $\tau_0$ as a rational map
$X \dasharrow \Grass^d(\fg)$, and denote by $X'$ the 
normalization of its graph. Then $X'$ is equipped with morphisms 
\[ \varphi : X' \longrightarrow X,  \quad 
\tau' : X' \longrightarrow \Grass^d(\fg) \] 
such that $\varphi$ restricts to an isomorphism above $X_0$, 
and $\tau' = \tau_0 \circ \varphi$. Moreover, the action of 
$\Aut_Y(X)$ on $X$ lifts to an action on $X'$ such that 
$\varphi$ and $\tau'$ are equivariant. Arguing as at the end 
of Step 2, one may check that $G \simeq \Aut_Y(X')^0$ and 
the image of $\Aut_Y(X)$ in $\Aut_Y(X')$ is closed. 
Since $f \circ \varphi$ is a contraction, this yields a reduction
to the case where $\tau_0$ extends to a morphism 
\begin{equation}\label{eqn:tits0}
\tau : X \longrightarrow \Grass^d(\fg).
\end{equation}

Next, we consider the case where $\charac(k) = p > 0$. 
For any integer $n > 0$, we then have the $n$th iterated 
Frobenius homomorphism
\[ F^n_{G/k} : G \longrightarrow G^{(p^n)} \]
and its kernel $G_n$. Recall that $G_n$ is an infinitesimal
group scheme, characteristic in $G$; moreover, its formation
commutes with base change, see e.g. 
\cite[VIIA.4.1]{SGA3}. (Note that $G_1$ is the infinitesimal
group scheme of height $1$ corresponding to the Lie algebra of
$G$). For the $G_n$-action on $X$, the stabilizer $\Stab_{G_n}$ 
is a subgroup scheme of $G_{n,X} = G_n \times X$,
stable by the diagonal action of $\Aut_Y(X)$.
Since $X$ is integral, it has a largest open subset $U = U_n$
over which $\Stab_{G_n}$ is flat, or equivalently locally free.
Then $U$ is stable by $\Aut_Y(X)$; moreover, since 
$\cO_{\Stab_{G_n}}$ is a quotient of $\cO(G_n) \otimes \cO_X$,
we obtain a morphism
\[ \tau_n : U \to \Grass^m(\cO(G_n)), \quad 
x \longmapsto \Stab_{G_n}(x) \]
for some $m \geq 1$. By construction, $\tau_n$ is 
$\Aut_Y(X)$-equivariant. As above, we may reduce to the
case where $\tau_n$ extends to a morphism 
\begin{equation}\label{eqn:titsp}
\tau : X \longrightarrow \Grass^m (\cO(G_n)).
\end{equation}

\medskip \noindent
{\bf Step 6.} We first record a general observation that will be used 
repeatedly. Given a morphism of projective varieties $f : X \to Y$,
we say that a line bundle on $X$ is $f$-\emph{big}
if its pull-back to the generic fiber $X_{\eta}$ is big 
(this notion turns out to be equivalent to that of 
\cite[Def.~3.22]{KM}).

\begin{lemma}\label{lem:fbig}
Let $f: X \to Y$ be a morphism of projective varieties, 
$L$ a line bundle on $X$, and $M$ a line bundle on $Y$.
Assume that $L$ is $f$-big and $M$ is ample. Then 
$L \otimes f^*(M^{\otimes n})$ is big for $n \gg 0$.
\end{lemma}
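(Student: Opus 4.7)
The plan is to reduce to Kodaira's characterization of bigness: a line bundle $N$ on a projective variety is big if and only if some power $N^{\otimes r}$ is of the form $A' \otimes \cO(E)$ with $A'$ ample and $E$ effective. I will show that some positive power of $L \otimes f^*(M^{\otimes n_0})$ has this form once $n_0$ is chosen large enough.

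First, fix any ample line bundle $A$ on $X$. Because $L|_{X_\eta}$ is big, applying Kodaira's lemma on the generic fiber (a projective scheme over $k(\eta)$) yields an integer $m > 0$ and a non-zero section of $(L^{\otimes m} \otimes A^{-1})|_{X_\eta}$. Such a section is a non-zero element of the generic stalk of the coherent sheaf $\cF := f_*(L^{\otimes m} \otimes A^{-1})$ on $Y$. Since $M$ is ample on $Y$, Serre's theorem provides some $k \gg 0$ such that $\cF \otimes M^{\otimes k}$ is globally generated; combined with the projection formula this gives
\[ H^0(X, L^{\otimes m} \otimes f^*(M^{\otimes k}) \otimes A^{-1}) \neq 0, \]
so $L^{\otimes m} \otimes f^*(M^{\otimes k}) \cong A \otimes \cO_X(E)$ for some effective divisor $E$ on $X$.

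Now pick any $n_0$ with $m n_0 \geq k$ and rewrite
\[ (L \otimes f^*(M^{\otimes n_0}))^{\otimes m} \cong \bigl(A \otimes f^*(M^{\otimes m n_0 - k})\bigr) \otimes \cO_X(E). \]
The first factor is ample, since $A$ is ample on $X$ and $f^*(M^{\otimes s})$ is nef for $s \geq 0$; the second is effective. Thus the $m$-th tensor power of $L \otimes f^*(M^{\otimes n_0})$ is ample-plus-effective, hence big by the same criterion. Consequently $L \otimes f^*(M^{\otimes n_0})$ itself is big, and the same argument applies to every $n \geq n_0$.

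The argument is essentially routine once one selects the right reformulation of bigness. The only points that deserve a little care are the appeal to Kodaira's lemma over the non-algebraically-closed residue field $k(\eta)$, and the observation that an ample line bundle on $X$ restricts to an ample line bundle on the generic fiber $X_\eta$; both follow from standard Serre-vanishing arguments that work over any noetherian base, so I do not anticipate a genuine obstacle.
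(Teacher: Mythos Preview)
Your argument is correct and follows essentially the same route as the paper's proof (which in turn adapts \cite[Lem.~2.4]{CCP}): fix an ample $A$ on $X$, use bigness of $L_\eta$ to get a nonzero section of $(L^{\otimes m}\otimes A^{-1})_\eta$ for some $m$, push forward and twist by a high power of $M$ to obtain a nonzero global section, and then conclude via the ample-plus-effective criterion. Your final step, writing $(L\otimes f^*M^{\otimes n_0})^{\otimes m}$ as (ample $\otimes$ nef) $\otimes$ effective, is a slightly more explicit version of the paper's ``taking $n$ to be a large multiple of $m$''.
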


\begin{proof}
We adapt the argument of \cite[Lem.~2.4]{CCP}. 
Denote by $\eta$ the generic point of $Y$. 
Let $A$ be an ample line bundle on $X$, and $m$ a positive 
integer. Then $f_*(L^{\otimes m} \otimes A^{-1})$ is 
a coherent sheaf on $Y$, and 
$f_*(L^{\otimes m} \otimes A^{-1})_{\eta} = 
H^0(X_{\eta}, L^{\otimes m} \otimes A^{-1})$
(see e.g. \cite[III.9.4]{Hartshorne}). 
Since $L$ is big on $X_{\eta}$, it follows that
$f_*(L^{\otimes m} \otimes A^{-1})_{\eta} \neq 0$
for $m \gg 0$ by arguing as in the proof of 
\cite[Lem.~2.60]{KM}.
As $M$ is ample on $Y$, we thus have 
$H^0(Y, f_*(L^{\otimes m} \otimes A^{-1})
\otimes M^{\otimes n}) \neq 0$ for $n \gg m \gg 0$. 
Equivalently,
$H^0(X, L^{\otimes m} \otimes A^{-1} 
\otimes f^*(M^{\otimes n})) \neq 0$. So 
$L^{\otimes m} \otimes f^*(M^{\otimes n}) = A \otimes E$
for some effective line bundle $E$ on $X$. Thus, taking 
$n$ to be a large multiple of $m$ yields 
the statement in view of \cite[Lem.~2.60]{KM} again.
\end{proof}

Next, we return to the setting of Step 5, and consider 
again the open $\Aut_Y(X)$-stable subset $X_0 \subset X$ 
consisting of $G$-orbits of maximal dimension $d$. Replacing
$X$ with the normalized blow-up of $X \setminus X_0$,
we may assume that there is an effective Cartier divisor
$\Delta$ on $X$ such that $X \setminus \Supp(\Delta)$
consists of $G$-orbits of dimension $d$, and 
$\cO_X(\Delta)$ is $\Aut_Y(X)$-linearized; in particular,
$\Supp(\Delta)$ is $\Aut_Y(X)$-stable.

Let $\tau$ be the morphism as in (\ref{eqn:tits0})
(if $\charac(k) = 0$) or in (\ref{eqn:titsp}) (if $\charac(k) = p$).
Denote by $M$ the Pl\"ucker line bundle on the Grassmannian
and let $L := \tau^*(M)$. Then $L$ is a line bundle on $X$,
equipped with an $\Aut_Y(X)$-linearization. So
$L^{\otimes m}(\Delta)$ is $\Aut_Y(X)$-linearized as well
for any integer $m$.

\begin{lemma}\label{lem:big}
With the above notation, the line bundle 
$L^{\otimes m}(\Delta)_{\eta}$ on $X_{\eta}$ is big
for $m \gg 0$ (and for $n \gg 0$ if $\charac(k) > 0$).
\end{lemma}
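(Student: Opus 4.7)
The plan is to reduce the lemma to the absolute bigness statement for almost homogeneous projective varieties, applied to the geometric generic fiber $X_{\bar{\eta}}$. Since bigness of a line bundle on a projective variety over a field is preserved under base change to the algebraic closure --- the dimensions $\dim H^0(X_{\eta}, N^{\otimes r})$ are preserved under the flat base extension $K \hookrightarrow \bar{K}$ --- it suffices to show that $L^{\otimes m}(\Delta)_{\bar{\eta}}$ is big on $X_{\bar{\eta}}$.

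Next, I would identify $X_{\bar{\eta}}$ as an absolute instance of the almost homogeneous situation. By Lemma \ref{lem:generic}, $X_{\bar{\eta}}$ is a normal projective variety over the algebraically closed field $\bar{K}$, and by Step 4 it is almost homogeneous under $G_{\bar{\eta}}$ with open orbit $G_{\bar{\eta}} \cdot x_0$. Since $X \setminus \Supp(\Delta)$ is a union of $G$-orbits of maximal dimension $d = \dim X_{\eta}$, its intersection with $X_{\bar{\eta}}$ is exactly this open orbit, so $\Supp(\Delta)_{\bar{\eta}} = X_{\bar{\eta}} \setminus G_{\bar{\eta}} \cdot x_0$. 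Moreover, the restriction of the Tits morphism $\tau$ from (\ref{eqn:tits0}) (respectively (\ref{eqn:titsp})) to $X_{\bar{\eta}}$ coincides with the Tits morphism (respectively its higher Frobenius-kernel analogue) for the $G_{\bar{\eta}}$-action on $X_{\bar{\eta}}$, because the Lie algebra, the Frobenius kernels $G_n$, and the stabilizer subgroup schemes together with their flat loci all commute with base change.

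Finally, I would invoke the absolute bigness statement from \cite{Br19}: on a normal projective variety over an algebraically closed field, almost homogeneous under a connected linear algebraic group $H$, if $\Delta_0$ is an effective Cartier divisor whose support is the complement of the open orbit and $L_0$ is the pullback of the Pl\"ucker bundle via the corresponding Tits-type morphism (defined using $\Lie H$ in characteristic zero, or using some Frobenius kernel $H_n$ with $n \gg 0$ in positive characteristic), then $L_0^{\otimes m}(\Delta_0)$ is big for $m \gg 0$. Applied to $X_{\bar{\eta}}$ over $\bar{K}$ with $L_0 = L_{\bar{\eta}}$ and $\Delta_0 = \Delta_{\bar{\eta}}$, this gives the desired bigness.

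The main obstacle lies in this last step. Strictly speaking, the absolute result in \cite{Br19} is phrased over the fixed algebraically closed ground field $k$, while here one works over the algebraically closed field $\bar{K}$, which is not finitely generated over $k$. Since the ingredients --- Grassmannians, Pl\"ucker embeddings, Tits-type morphisms, and the geometry of almost homogeneous varieties --- are formulated intrinsically and behave well under base change, this adaptation should be essentially formal; but in positive characteristic one must still verify that the Frobenius-kernel index $n$ needed in the absolute argument can be chosen uniformly enough to be compatible with the $n$ of (\ref{eqn:titsp}) in Step 5.
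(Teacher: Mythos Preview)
There is a genuine gap in your reduction to the geometric generic fiber. You assert that, by Lemma~\ref{lem:generic}, the variety $X_{\bar{\eta}}$ is \emph{normal}. But Lemma~\ref{lem:generic} only proves that $X_{\eta}$ is normal; it says nothing about geometric normality, and Remark~\ref{rem:generic} gives an explicit example in characteristic $p$ where $X_{\bar{\eta}}$ is singular. Since the absolute result of \cite{Br19} you invoke is stated for \emph{normal} almost homogeneous varieties, your citation does not apply as written. One could attempt a repair by passing to the normalization of $X_{\bar{\eta}}$ and tracking $L_{\bar{\eta}}$ and $\Delta_{\bar{\eta}}$ through that finite birational map, but this is extra work you have not carried out, and it interacts with your already-acknowledged obstacle about matching the intrinsic Tits-type morphism on the fiber with the restriction of $\tau$.

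The paper sidesteps this by staying over $K$ rather than $\bar{K}$: the fiber $X_{\eta}$ is genuinely normal, and it is almost homogeneous under $G_{\eta}$ thanks to the $K$-rational point $x_0$ from Step~4. Since $K$ is not algebraically closed, the paper cannot simply cite \cite{Br19}; instead it gives a direct argument. Letting $H$ be the stabilizer of $x_0$ in $G_{\eta}$, it identifies the open orbit in $Z_{\eta} = \tau(X)_{\eta}$ with $G_{\eta}/N_{G_{\eta}}(H^0)$ (using \cite[Lem.~3.1]{Br19} to choose $n$ in positive characteristic), shows that $\tau$ restricted to the open orbit is an affine morphism $G_{\eta}/H \to G_{\eta}/N_{G_{\eta}}(H^0)$, and deduces that the complement of $\Supp(\Delta)$ in the fiber $X_{\zeta}$ over the generic point $\zeta$ of $Z$ is affine, i.e.\ $\Delta$ is $\tau$-big. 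It then concludes by applying Lemma~\ref{lem:fbig} to $\tau$ with the ample Pl\"ucker bundle on $Z$. This last step is the organizational difference from your plan: rather than invoking a packaged bigness theorem on the fiber, the paper factors the problem as ``$\Delta$ is $\tau$-big'' plus ``$M$ is ample on $Z$''.
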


\begin{proof}
Choose $x_0 \in X(k(\eta))$ such that 
$G_{\bar{\eta}} \cdot x_0$ is open in $X_{\bar{\eta}}$
(Step 4); then $G_{\bar{\eta}} \cdot x_0 = (X_0)_{\bar{\eta}}$.
In particular, $X_{\eta}$ is a normal projective variety,
almost homogeneous under $G_{\eta}$. This is the setting of 
Theorem 1 in \cite{Br19} (except that $k(\eta)$ need not be
algebraically closed), and the desired assertion may be obtained 
by arguing as in the last paragraph of the proof of that theorem. 
We provide an alternative argument based on Lemma \ref{lem:fbig}. 

Denote by $H$ the stabilizer of $x_0$ in $G_{\eta}$ 
and by $\fh$ its Lie algebra. If $\charac(k) = 0$, 
then we have the equality of normalizers 
$N_{G_{\eta}}(H^0) = N_{G_{\eta}}(\fh)$.
If $\charac(k) > 0$, then 
$N_{G_{\eta}}(H^0) = N_{G_{\eta}}(H_n)$
for $n \gg 0$, in view of \cite[Lem.~3.1]{Br19}.

Let $Z := \tau(X)$ and $z_0 := \tau(x_0)$, so that 
$z_0 = \fh$ if $\charac(k) = 0$ and $z_0 = H_n$
if $\charac(k) > 0$. Then $Z_{\eta}$ is almost homogeneous 
under $G_{\eta}$ and its open orbit is 
$G_{\eta} \cdot z_0 \simeq G_{\eta}/N_{G_{\eta}}(H^0)$. 
Moreover, $\tau$ restricts to an affine morphism
$f : G_{\eta}/H \to G_{\eta}/N_{G_{\eta}}(H^0)$ 
by \cite[Lem.~3.1]{Br19} again. Thus,
$X_{\eta} \setminus \Supp(\Delta_{\eta})$ is affine
over $G_{\eta} \cdot z_0$. Denoting by $\zeta$ the 
generic point of $Z$ (or equivalently of $Z_{\eta})$, 
it follows that 
$X_{\zeta} \setminus \Supp(\Delta_{\zeta})$ is affine.
As a consequence, $\Delta$ is $\tau$-big (see e.g. 
\cite[Lem.~2.4]{Br19}). Since $M$ is ample on $Z$, 
we conclude by Lemma \ref{lem:fbig}.
\end{proof}

By Lemmas \ref{lem:fbig} and \ref{lem:big}, $X$ admits 
a big line bundle which is $\Aut_Y(X)$-linearized. As seen 
in Step 3, this completes the proof of Theorem \ref{thm:rel}.

\begin{remark}\label{rem:char0}
Assume that $\charac(k) = 0$ and consider
$f : X \to Y$ as in Step 2. Choose a desingularization
$\psi : Y' \to Y$ and denote by $X'$ the normalization
of the irreducible component of $X \times_Y Y'$ which
dominates $Y$. By arguing as at the end of the proof
of Step 2, one shows that $\Aut_Y(X)$ is isomorphic
to a closed subgroup of $\Aut_{Y'}(X')$. So we may
assume that $Y$ is smooth.

We may further assume that $X$ is smooth, in view of
the existence of a canonical desingularization
(see \cite[Thm.~3.36]{Kollar}; such a desingularization 
is $\Aut(X)$-equivariant by the argument of 
\cite[Prop.~3.9.1]{Kollar}). Then the generic fiber
$X_{\eta}$ is smooth (by generic smoothness, see e.g.
\cite[III.10.7]{Hartshorne}); also, $X_{\bar{\eta}}$
is almost homogeneous under $G_{\bar{\eta}}$ in view
of Lemma \ref{lem:generic}. By \cite[Thm.~1.2]{FZ},
it follows that the anticanonical class
$-K_{X_{\bar{\eta}}}$ is big, hence so is $-K_{X_{\eta}}$. 
Also, $\cO_X(-K_X)$ is $\Aut(X)$-linearized.
Combining this with Lemmas \ref{lem:big} and \ref{lem:fbig}, 
this yields a shorter proof of Theorem \ref{thm:rel},
in characteristic zero again.
\end{remark}

\section{Proofs of Theorem \ref{thm:full} and Corollary \ref{cor:surface}}
\label{sec:full}

Like that of Theorem \ref{thm:rel}, the proof of Theorem \ref{thm:full} 
goes through a succession of reduction steps.

\medskip \noindent
{\bf Step 1.} Let $Y$ be the smooth projective curve
such that $K = k(Y)$. Then we may assume that the 
inclusion $k(Y) \subset k(X)$ comes from a contraction
$f : X \to Y$, and $G = \Aut_Y(X)^0$.

\begin{proof}
The invariant field $k(X)^{G^0}$ is algebraic over $K$, 
and therefore has transcendence degree $1$ over $k$. 
Thus, we may replace $G$ with $G^0$, and hence assume 
that $G$ is connected.

Denote by $f : X \dasharrow Y$ the rational
map associated with the inclusion $k(Y) \subset k(X)$. 
Arguing as in Step 2 of Section \ref{sec:rel}, 
we may further assume that $f$ is a morphism; 
then it is a contraction. 

By Lemma \ref{lem:neutral}, the group 
$\Aut_Y(X)^0 = \Aut_K(X)^0$ is linear; thus, we may 
also assume that $G = \Aut_Y(X)^0$.
\end{proof}

\medskip \noindent
{\bf Step 2.} We may further assume that 
$X$ is not almost homogeneous under $\tG := \Aut(X)^0$. 

\begin{proof}
By Blanchard's lemma again (see e.g. \cite[Prop.~4.2.1]{BSU}), 
the action of $\tG$ on $X$ induces a unique action
on $Y$ such that $f$ is equivariant. This yields an exact 
sequence of algebraic groups
\[ 1 \longrightarrow \tG \cap \Aut_Y(X)
 \longrightarrow \tG \longrightarrow \Aut(Y)^0, \]
where $\tG \cap \Aut_Y(X)$ has reduced neutral component
$G$, and hence is affine.

If $X$ is almost homogeneous under $\tG$, then
so is $Y$. Thus, either $Y \simeq \bP^1$
or $Y$ is an elliptic curve. In the former case,
we claim that every rational map from $X$
to an abelian variety is constant. Indeed, any
such map $\varphi : X \dasharrow A$ is $G$-invariant.
Since $k(X)^G = k(\bP^1)$, it follows that
$\varphi$ factors through a rational map 
$\bP^1 \dasharrow A$, implying the claim.

By this claim, the open orbit of $\tG$ in $X$
has a trivial Albanese variety. Arguing as in 
the proof of Lemma \ref{lem:neutral}, 
it follows that $\tG$ is linear. Thus,
so is $\Aut(X)$ in view of \cite[Thm.~1]{Br19}.

On the other hand, if $Y$ is an elliptic curve,
then $f$ is the Albanese morphism of $X$, since 
the latter morphism is $G$-invariant. Thus, there is
a unique action of $\Aut(X)$ on $Y$ such that 
$f$ is equivariant. This yields an exact sequence
of group schemes, locally of finite type
\begin{equation}\label{eqn:aut}
1 \longrightarrow  N \longrightarrow 
\Aut(X) \stackrel{f_*}{\longrightarrow} \Aut(Y),
\end{equation}
where the reduced subscheme of $N$ is $\Aut_Y(X)$.
In view of  Theorem \ref{thm:rel}, it follows that
$N$ is an affine algebraic group. 
Also, the image of $f_*$ contains $Y = \Aut(Y)^0$,
since $X$ is almost homogeneous under $\tG$.
As $\Aut(Y)$ is a non-linear algebraic group, 
we conclude that so is $\Aut(X)$.
\end{proof}

\medskip \noindent
{\bf Step 3.} We may further assume that 
$G = \tG = \Aut(X)^0$, and $\Aut(X)$ acts on $Y$ 
so that we still have the exact sequence (\ref{eqn:aut});
in addition, $Y \simeq \bP^1$ or $Y$ is an elliptic 
curve. 

\begin{proof}
By Rosenlicht's theorem again,
the subfield $k(X)^{\tG} \subset K$ has transcendence
degree $1$ over $K$. As $k(X)^{\tG}$ is algebraically
closed in $k(X)$ (Lemma \ref{lem:closed}), it follows 
that $k(X)^{\tG} = K$. So $\tG$ is linear in view of 
Lemma \ref{lem:neutral}.

Therefore, we may assume that $G = \Aut(X)^0$. 
Then $\Aut(X)$ stabilizes $K$, and hence acts on $Y$;
this yields (\ref{eqn:aut}).

Denote by $g$ the genus of $Y$. If $g \geq 2$ then 
$\Aut(Y)$ is finite, and hence $\Aut(X)$ is 
a linear algebraic group. So we may further assume 
that $g \leq 1$.
\end{proof}

\medskip \noindent
{\bf Step 4.} We may further assume that $X$ has 
an $\Aut(X)$-linearized line bundle $L$ which is $f$-big 
and $f$-globally generated, i.e., the adjunction map 
\[ u : f^* f_* (L) \longrightarrow L \] 
is surjective.

\begin{proof}
Arguing as in Steps 4, 5 and 6 of Section \ref{sec:rel}, 
we may assume that $X$ has an $f$-big line bundle 
$L$ which is $\Aut(X)$-linearized; moreover,
$H^0(X,L) \neq 0$. Then $f_*(L)$ is a coherent,
torsion-free sheaf on $Y$, and hence is locally
free. Thus, so is $f^* f_*(L)$; moreover, $u$
is a morphism of $\Aut(X)$-linearized sheaves.
Also, $u \neq 0$ as $L$ has non-zero global
sections. Therefore, $u$ yields a non-zero
morphism $L^{-1} \otimes f^* f_*(L) \to \cO_X$.
Its image is the ideal sheaf of a closed subscheme
$Z \subsetneq X$, stable by $\Aut(X)$. 

Denote by 
\[ \pi: X' \longrightarrow X \] 
the normalization of the blow-up of $Z$ in $X$ and let 
\[ f' := f \circ \pi : X' \longrightarrow Y. \]
Then the action of $\Aut(X)$ on $X$ lifts to a unique 
action on $X'$; moreover, $\pi^*(L)$ is 
$\Aut(X)$-linearized and we have
$f'^* f'_* \pi^*(L) = \pi^* f^* f_*(L)$.
The image of the adjunction map 
\[ u': f'^* f'_* \pi^*(L) \longrightarrow \pi^*(L) \] 
generates an invertible subsheaf $L' \subset \pi^*(L)$,
as proved (in essence) in 
\cite[Ex.~II.7.17.3]{Hartshorne}. Note that $L'$ is 
$\Aut(X)$-linearized and $f'$-globally generated.

We claim that $L'$ is $f'$-big, possibly
after replacing $L$ with a positive tensor
power $L^{\otimes m}$, and $L'$ with the
associated subsheaf 
$L'_m \subset \pi^*(L^{\otimes m})$. 
Indeed, denoting by $\eta$ the generic point
of $Y$, we have
\[ f_*(L^{\otimes m})_{\eta} = 
H^0(X_{\eta}, L^{\otimes m}) \]
by \cite[III.9.4]{Hartshorne}, and likewise,
\[ f'_* \pi^*(L^{\otimes m})_{\eta} = 
H^0(X'_{\eta}, \pi^*(L^{\otimes m})) =
H^0(X_{\eta}, L^{\otimes m}). \]
Thus, $\pi^*(L)$ is $f'$-big. Moreover,
denoting by $i : X_{\eta} \to X$ and
$i' : X'_{\eta} \to X'$ the inclusions,
we obtain
\[ 
i'^* f'^* f'_* \pi^*(L^{\otimes m}) =
f'_* \pi^*(L^{\otimes m})_{\eta} 
\otimes \cO_{X'_{\eta}} = 
H^0(X'_{\eta}, \pi^*(L^{\otimes m}))
\otimes \cO_{X'_{\eta}}.
\]
Since $L'_m$ is the subsheaf of 
$\pi^*(L^{\otimes m})$ generated by the image 
of $f'^* f'_* \pi^*(L^{\otimes m})$, 
we see that $i'^*(L'_m)$ is generated by
the image of the natural map
\[ H^0(X_{\eta}, L^{\otimes m}) 
\otimes \cO_{X'_{\eta}} 
\longrightarrow i'^* \pi^*(L^{\otimes m}). \] 
As $i'^* \pi^*(L^{\otimes m})$ is big, this
proves the claim.

This claim combined with \cite[Lem.~2.1]{Br19}
yields the desired reduction.
\end{proof}

\medskip \noindent
{\bf Step 5.} Choose an ample line bundle $M$ on $Y$. 
Then $L \otimes f^*(M^{\otimes n})$ is big for
$n \gg 0$ (Lemma \ref{lem:fbig}). We claim
that $L \otimes f^*(M^{\otimes n})$ is also globally 
generated for $n \gg 0$. 

Indeed, since $M$ is ample, the sheaf
$f_*(L) \otimes M^{\otimes n}$ is globally generated 
for $n \gg 0$. Equivalently, the evaluation map
\[ H^0(Y,f_*(L) \otimes M^{\otimes n}) 
\otimes \cO_Y \to  
f_*(L) \otimes M^{\otimes n} \]
is surjective. As $f$ is flat, the induced map
\[ H^0(Y,f_*(L) \otimes M^{\otimes n}) 
\otimes \cO_X \to  
f^*(f_*(L) \otimes M^{\otimes n}) \]
is surjective as well. Since 
$H^0(Y,f_*(L) \otimes M^{\otimes n}) = 
H^0(X,f^*(f_*(L) \otimes M^{\otimes n}))$,
it follows that the sheaf $f^*(f_*(L) \otimes M^{\otimes n})$
is globally generated. Thus, so is its
quotient $L \otimes f^*(M^{\otimes n})$,
proving the claim.

\medskip \noindent
We may now complete the proof of Theorem \ref{thm:full}.
Observe that $\Aut(X)$ fixes the numerical
equivalence class of $f^*(M^{\otimes n})$
(since $\Aut(Y)$ fixes that of $M^{\otimes n}$)
and hence that of 
$L \otimes f^*(M^{\otimes n})$. Thus, $\Aut(X)$
is an algebraic group in view of \cite[Lem.~2.2]{Br19}.

If $Y \simeq \bP^1$, then $\Aut(X)$ fixes the isomorphism 
class of $f^* \cO_{\bP^1}(1)$, since $\Aut(Y) = \PGL_2$
fixes that of $\cO_{\bP^1}(1)$. So $\Aut(X)$ is linear
algebraic by \cite[Lem.~2.3]{Br19}. Otherwise,
$Y$ is an elliptic curve with function field $K$.

\medskip \noindent
{\bf Proof of Corollary \ref{cor:surface}.} 
By assumption, there exists a non-trivial connected linear 
algebraic subgroup $G \subset \Aut(X)$. Replacing $G$ 
with a Borel subgroup, we may assume that it is solvable. 
Then $k(X)$ is a purely transcendental field extension of 
$K := k(X)^G$ in view of \cite[Thm.~1]{Popov}.
Also, recall that $K$ is algebraically closed in $k(X)$
(Lemma \ref{lem:closed}). Since $K \neq k(X)$, it follows
that either $K = k$, or $K$ has transcendence degree
$1$ over $k$. In the former case, $\Aut(X)$ is linear
algebraic by \cite[Thm.~1]{Br19}. In the latter case,
Theorem \ref{thm:full} yields that $\Aut(X)$ is algebraic, 
and linear unless $K$ is the function field of an elliptic 
curve $Y$; then $k(X) \simeq K(t) \simeq k(Y \times \bP^1)$.

\medskip \noindent
{\bf Acknowledgments.} This work was prompted by 
a minicourse on automorphism groups of algebraic varieties
that I gave at the summer school 
``Visions of Algebraic Groups'', held at St Petersburg,
26--30 August 2019. I thank the organizers of this event 
for their invitation, and the participants, especially 
Ivan Arzhantsev, for stimulating discussions. Also, thanks 
to the anonymous referee for valuable comments and suggestions.

\bibliographystyle{amsalpha}

\begin{thebibliography}{A}


\bibitem[AHHL14]{AHHL}
I.~Arzhantsev, J.~Hausen, E.~Herppich, A.~Liendo,
\textit{The automorphism group of a variety with torus 
action of complexity one},
Moscow Math. J. \textbf{14} (2014), 429--471.


\bibitem[Bad01]{Badescu}
L.~B\u{a}descu,
\textit{Algebraic surfaces},
Universitext, Springer, 2001.


\bibitem[BGR17]{BGR}
J.~P.~Bell, D.~Ghioca, Z.~Reichstein,
\textit{On a dynamical version of a theorem of Rosenlicht},
Ann. Sci. Norm. Super. Pisa Cl. Sci. (5) \textbf{17} (2017),
no.~1, 187--204. 


\bibitem[Bri19]{Br19}
M.~Brion,
\textit{Automorphism groups of almost homogeneous varieties}, 
arXiv:1810.09115, to appear in a volume of Lecture Notes of
the London Mathematical Society in honor of Bill Fulton.


\bibitem[BSU13]{BSU}
M.~Brion, P.~Samuel, V.~Uma,
\textit{Lectures on the structure of algebraic groups
and geometric applications},
Hindustan Book Agency, New Dehli, 2013; available at
{\tt https://www-fourier.univ-grenoble-alpes.fr/$\;\widetilde{}\;$mbrion/chennai.pdf} 


\bibitem[CCP08]{CCP}
F.~Campana, J.~A.~Chen, T.~Peternell,
\textit{Strictly nef line bundles},
Math. Ann. \textbf{342} (2008), no.~3, 565--585.






\bibitem[DO19]{DO}
T.-C.~Dinh, K.~Oguiso,
\textit{A surface with discrete and non-finitely generated 
automorphism group}, 
Duke Math. J. \textbf{168} (2019), 941--966.


\bibitem[EGA]{EGA}
A.~Grothendieck,
\textit{\'El\'ements de g\'eom\'etrie alg\'ebrique
(r\'edig\'es avec la collaboration de J.~Dieudonn\'e)~: 
II. \'Etude globale \'el\'ementaire de quelques classes
de morphismes},
Pub. Math. I.H.\'E.S. \textbf{8} (1961), 5--222.


\bibitem[FZ13]{FZ}
B.~Fu, D.-Q.~Zhang,
\textit{A characterization of compact complex tori via 
automorphism groups},
Math. Ann. \textbf{357} (2013), no.~3, 961--968.


\bibitem[Gro61]{Gro61}
A.~Grothendieck,
\textit{Techniques de construction et th\'eor\`emes d'existence
en g\'eom\'etrie alg\'ebrique IV~: les sch\'emas de Hilbert},
S\'em. Bourbaki, Vol.~\textbf{6} (1960--1961), Exp.~221, 249--276.


\bibitem[Hab74]{Haboush}
W.~Haboush,
\textit{The scheme of Lie sub-algebras of a Lie algebra and
the equivariant cotangent map},
Nagoya Math. J. \textbf{53} (1974), 59--70.


\bibitem[Har87]{Harbourne}
B.~Harbourne,
\textit{Rational surfaces with infinite automorphism group and
no antipluricanonical curve},
Proc. Amer. Math. Soc. \textbf{99} (1987), no.~3, 409--414.


\bibitem[Har77]{Hartshorne}
R.~Hartshorne,
\textit{Algebraic geometry},
Grad. Texts in Math. \textbf{52}, Springer, 1977.


\bibitem[HO84]{HO}
A.~Huckleberry, E.~Oeljeklaus,
\textit{Classification theorems for almost homogeneous spaces},
Institut \'Elie Cartan \textbf{9}, Nancy, 1984.


\bibitem[Kol07]{Kollar}
J.~Koll\'ar,
\textit{Lectures on resolution of singularities},
Annals of Math. Stud. \textbf{166}, Princeton, 2007.


\bibitem[KM98]{KM}
J.~Koll\'ar, S.~Mori,
\textit{Birational geometry of algebraic varieties},
Cambridge Tracts Math. \textbf{134}, 
Cambridge University Press, 1998.




\bibitem[Lip78]{Lipman}
J.~Lipman,
\textit{Desingularization of two-dimensional schemes},
Ann. Math. \textbf{107} (1978), no.~2, 151--207.



\bibitem[Mar71]{Maruyama}
M.~Maruyama,
\textit{On automorphism groups of ruled surfaces},
J. Math. Kyoto Univ. \textbf{11} (1971), no.~1, 89--112.


\bibitem[Mum08]{Mumford}
D.~Mumford,
\textit{Abelian varieties.} With appendices by C.~P.~Ramanujam 
and Yuri Manin. Corrected reprint of the 2nd ed. 1974,
Hindustan Book Agency, New Dehli, 208.


\bibitem[Ogu19]{Oguiso}
K.~Oguiso,
\textit{A surface in odd characteristic with discrete and 
non-finitely generated automorphism group}, 
arXiv:1901.01351.


\bibitem[Pop16]{Popov}
V.~L.~Popov,
\textit{Birational splitting and algebraic group actions},
Eur. J. Math. \textbf{2} (2016), no.~1, 283--290.


\bibitem[Ros63]{Rosenlicht}
M.~Rosenlicht,
\textit{A remark on quotient spaces},
Anais Acad. Brasil Ci. \textbf{35} (1963), 487--489.


\bibitem[SGA3]{SGA3}
M.~Demazure, A.~Grothendieck, 
\textit{S\'eminaire de G\'eom\'etrie Alg\'ebrique du Bois Marie, 1962--64,
Sch\'emas en groupes (SGA3)},  
Tome I. Propri\'et\'es g\'en\'erales des sch\'emas en groupes, 
Doc. Math. \textbf{7}, Soc. Math. France, Paris, 2011.




\bibitem[Spr89]{Springer}
T.~A.~Springer,
\textit{Aktionen reduktiver Gruppen auf Variet\"aten},
in: Algebraische Transformationsgruppen und Invariantentheorie,
3--39, DMV Semin. \textbf{13}, Birkh\"auser, Basel, 1989.



\bibitem[SP19]{SP}
The Stacks Project Authors,
\textit{Stacks Project}, 
{\tt http://stacks.math.columbia.edu}, 2019.



\end{thebibliography}

\end{document}